\theoremstyle{theorem}
\newtheorem{theorem}{Theorem}
\newtheorem{proposition}[theorem]{Proposition}
\newtheorem{lemma}[theorem]{Lemma}
\theoremstyle{definition}
\newtheorem{definition}[theorem]{Definition}
\begin{document}
\title{Combinatorics of Multicompositions}
\author{Brian Hopkins}
\address{Saint Peter's University, Jersey City NJ 07306, USA}
\email{bhopkins@saintpeters.edu}
\author{St\'ephane Ouvry}
\address{LPTMS, CNRS, Universit\'{e} Paris-Sud, 91405 Orsay Cedex, France}
\email{stephane.ouvry@u-psud.fr}

\maketitle             

\begin{abstract}
Integer compositions with certain colored parts were introduced by Andrews in 2007 to address a number-theoretic problem.  Integer compositions allowing zero as some parts were introduced by Ouvry and Polychronakos in 2019.  We give a bijection between these two varieties of compositions and determine various combinatorial properties of these multicompositions.  In particular, we determine the count of multicompositions by number of all parts, number of positive parts, and number of zeros.  Then, working from three types of compositions with restricted parts that are counted by the Fibonacci sequence, we find the sequences counting multicompositions with analogous restrictions.  With these tools, we give combinatorial proofs of summation formulas for generalizations of the Jacobsthal and Pell sequences.
\keywords{integer compositions, multinomial coefficients, integer sequences, generating functions, combinatorial proofs}
\end{abstract}

\section{Introduction}
A composition of a positive integer $n$ is an ordered collection of positive integers whose sum is $n$.  For instance, there are four compositions of 3: $1+1+1$, $1+2$, $2+1$, and $3$.  We refer to the summands as parts.  

The next section begins with the formal definition of multicompositions that we will use.  Here are two closely related generalizations of compositions that inform the current work.

In 2007, George Andrews introduced $k$-compositions where $k$ copies of each positive integer are available as parts, denoted by subscripts, sometimes considered as colors \cite{a}.  There is a restriction that the last part must have subscript 1.  There are nine 2-compositions of 3:
\begin{gather*} 
1_1+1_1+1_1, 1_1+1_2+1_1, 1_2+1_1+1_1, 1_2+1_2+1_1,\\
 1_1+2_1, 1_2+2_1, 2_1+1_1, 2_2+1_1, 3_1.
 \end{gather*}
Andrews used these multicompositions to generalize and solve a problem of Emeric Deutsch about divisors of the number of compositions with relatively prime summands, a variety of compositions introduced by Henry Gould.

In 2019, the second named author and Alexios Polychronakos introduced $g$-compositions where up to $g-2$ zeros can occur between positive parts \cite[p. 11]{op}.  Thus standard compositions are the $g=2$ case.  For $g=3$, there are nine such compositions of 3:
\begin{gather*}
1+1+1, 1+1+0+1, 1+0+1+1, 1+0+1+0+1, \\
 1+2, 1+0+2, 2+1, 2+0+1, 3. 
 \end{gather*}
Here the motivation comes from quantum mechanics and the notion of exclusion statistics.  See the last section for a description along with an open problem.

In Section \ref{sec2}, we give our definition of multicompositions and determine their count and generating function, introducing a combinatorial interpretation that is the basis for subsequent results.  Section \ref{sec3} gives counts of multicompositions by various kinds of parts: number of all parts, number of positive parts, and number of zeros.  Section \ref{sec4} is informed by three types of restricted standard compositions counted by the Fibonacci sequence; the analogous multicomposition counts diverge into different families of sequences.  In Section \ref{sec5}, we connect the triangular arrays of numbers from Section \ref{sec3} with sequences from Section \ref{sec4} and a different kind of of composition introduced in 2020 \cite{bhr}---these results allow us to give combinatorial proofs of summation formulas for generalizations of Jacobsthal and Pell sequences.  Section \ref{sec6} gives more background for the physics motivation \cite{op,ow} and some open questions.

\section{Multicompositions and generating functions} \label{sec2}
Now we give our two-part definition of multicompositions, explain how these connect to the earlier definitions, and show that the two manifestations are equivalent.

\begin{definition} \label{kcompdef}
A $k$-composition of $n$ is a standard composition of $n$ with either of the following equivalent modifications.
\begin{itemize}
\item[(a)] Each part is assigned a color $1, \ldots, k$ (denoted by a subscript) except that the first part must have color 1.
\item[(b)] Each part except the first can be immediately preceded by up to $k-1$ zeros.
\end{itemize}
Write $C^k(n)$ for the set of $k$-compositions of $n$ and $c^k(n)$ for the number of them.  Collectively, or when $k$ is not specified, these compositions are called multicompositions.
\end{definition}

The connections to the earlier definitions are clear: The colored parts definition (a) just switches the restriction of Andrews's $k$-composition from requiring the last part to have color 1 to the first part.  The internal zeros definition (b) is precisely the Ouvry--Polychronakos definition with $k = g-1$.  It remains for us to connect our (a) and (b) definitions.

We show that the (a) and (b) definitions of multicompositions are equivalent by demonstrating a bijection between the compositions they describe.  The correspondence is determined part by part, explicitly
\[c_\ell \longleftrightarrow \overbrace{0+\cdots+0}^{\ell-1 \text{ zeros}}+c.\]
The correspondence for the two versions of 2-compositions of 3 is shown in Table \ref{3bij}
.  With the equivalence of the two definitions established, we will switch between them as needed.
\begin{table}[t]  
\centering
{\renewcommand{\arraystretch}{1.3}
\begin{tabular}{c|c} \hline
colored parts \, & \, internal zeros \\ \hline
$1_1+1_1+1_1$ & 1+1+1\\
$1_1+1_1+1_2$ & 1+1+0+1\\
$1_1+1_2+1_1$ & 1+0+1+1\\ 
$1_1+1_2+1_2$ & 1+0+1+0+1\\
$1_1+2_1$ & 1+2 \\ 
$1_1+2_2$ & 1+0+2\\
$2_1+1_1$ & 2+1 \\
$2_1+1_2$ & 2+0+1\\
$3_1$ & 3 \\ \hline
\end{tabular}}
\caption{Correspondence between the two versions of 2-compositions of 3.}
\label{3bij}
\end{table}

All subsequent references to $k$-compositions indicate the parametrization of Definition \ref{kcompdef}.  In Section \ref{sec6}, while discussing the motivation for $g$-compositions, we will return to the Ouvry--Polychronakos indexing, where the $g$ parameter is clearly indicated.

Next, we determine $c^k(n)$, the number of $k$-compositions of $n$, and its generating function.
\begin{proposition} \label{p2}
The generating function for $c^k(n)$ is
\begin{align}
\sum_{n \ge 1} c^k(n) x^n & = \frac{\sum_{i\ge1} x^i}{1-k\sum_{i\ge1} x^i} \label{e1} \\
				     & = \frac{x}{1-(k+1)x} \label{e2}
\end{align}
and $c^k(n) = (k+1)^{n-1}$.
\end{proposition}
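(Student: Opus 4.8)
The plan is to establish the first generating function identity \eqref{e1} directly from the colored-parts description (a), then reduce it to \eqref{e2} by elementary algebra, and finally read off the coefficient formula. First I would decompose an arbitrary $k$-composition of $n$ in form (a) into its first part together with the (possibly empty) sequence of remaining parts. The first part is an unrestricted positive integer that carries the forced color $1$, contributing a factor $\sum_{i\ge1}x^i$ to the generating function; each subsequent part is a positive integer together with one of $k$ colors, contributing $k\sum_{i\ge1}x^i$; and since any number $m\ge0$ of such parts may follow, summing the geometric series in $m$ gives
\[
\sum_{n\ge1}c^k(n)x^n=\Bigl(\sum_{i\ge1}x^i\Bigr)\sum_{m\ge0}\Bigl(k\sum_{i\ge1}x^i\Bigr)^m=\frac{\sum_{i\ge1}x^i}{1-k\sum_{i\ge1}x^i},
\]
which is \eqref{e1}. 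This is all at the level of formal power series, so convergence is not a concern.

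Next I would substitute the closed form $\sum_{i\ge1}x^i=x/(1-x)$ into \eqref{e1}: the numerator becomes $x/(1-x)$ and the denominator becomes $1-kx/(1-x)=(1-(k+1)x)/(1-x)$, so the common factor $1-x$ cancels and the ratio collapses to $x/(1-(k+1)x)$, which is \eqref{e2}. Expanding this as $x\sum_{m\ge0}(k+1)^m x^m=\sum_{n\ge1}(k+1)^{n-1}x^n$ and comparing coefficients of $x^n$ yields $c^k(n)=(k+1)^{n-1}$.

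As a check, and because it underlies the combinatorial interpretation used in later sections, I would also note the direct count: a $k$-composition of $n$ in form (a) is determined by choosing, among the $n-1$ gaps between consecutive units of $\underbrace{1+\cdots+1}_{n}$, which gaps are ``cuts,'' and, for each cut, which of the $k$ colors the newly started part receives---that is $k+1$ independent choices at each of $n-1$ gaps, hence $(k+1)^{n-1}$. There is no real obstacle here; the one point that needs care in either argument is the asymmetry of the first part, which is the unique part with no color freedom (equivalently, no gap precedes it), so that it contributes no extra factor of $k$. A quick look at the base case $c^k(1)=(k+1)^0=1$, matching the single $k$-composition ``$1$,'' confirms the bookkeeping.
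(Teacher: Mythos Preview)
Your proof is correct and follows essentially the same approach as the paper: the generating function \eqref{e1} is derived from the colored-parts decomposition (first part uncolored, each later part with a factor of $k$), the simplification to \eqref{e2} proceeds via the same $x/(1-x)$ substitution, and your direct combinatorial count via $k+1$ choices at each of the $n-1$ gaps is exactly the board-and-marker argument the paper gives (and emphasizes for use in later sections). The only cosmetic difference is that you extract $(k+1)^{n-1}$ from the series expansion first and then give the combinatorial count as a check, whereas the paper skips the expansion and proceeds directly to the combinatorial argument.
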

\begin{proof}
The expression \eqref{e1} follows from the colored parts definition of multicompositions: The numerator accounts for the first part which must have color 1 while the denominator indicates a geometric series where each subsequent part can have color $1, \ldots$, or  $k$.  The simplification to \eqref{e2} can be seen with the intermediate steps
\[\frac{x\left(\frac{1}{1-x}\right)}{1-kx \left(\frac{1}{1-x}\right)} = \frac{x}{1-x-kx}.\]

The $c^k(n)$ formula follows directly from the generating function.  However, we give a combinatorial proof instead, in order to introduce ideas used in the sequel.

To construct a $k$-composition of length $n$, consider starting with a length $n$ board.  Working from left to right, at each of the $n-1$ internal integer positions, make a choice about the relation of the two adjacent squares: they can either be joined into a longer part or separated so that a new part starts to the right.  Further, a new part is assigned one of $k$ colors.  Thus there are $k+1$ choices at each internal position, denoted by markers J for join, ${\rm S}_1$ for separate and start a part with color 1, \dots, ${\rm S}_k$ for separate and start a part with color $k$.  Figure \ref{4comps} 
shows several examples of these tilings and the corresponding 2-compositions of 3.
Note that there is no separation choice for the first part; its color is always 1.  It is clear that this is a bijection between the choices and $k$-compositions of $n$.  The number of possibilities for $n-1$ choices each with $k+1$ options is $(k+1)^{n-1}$.
\end{proof}
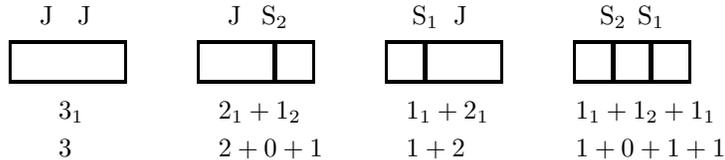
\begin{figure}[t] 
\centering
\setlength{\unitlength}{.5cm}
\begin{picture}(18,4)
\setlength{\fboxsep}{0pt}
\thicklines
\put(0.75,3.5){J}
\put(1.75,3.5){J}
\put(0,2){\framebox(3,1){}}
\put(1.25,1){$3_1$}
\put(1.25,0){$3$}
\put(5.75,3.5){J}
\put(6.65,3.5){${\rm S}_2$}
\put(5,2){\framebox(2,1){}}\put(7,2){\framebox(1,1){}}
\put(5.5,1){$2_1+1_2$}
\put(5.5,0){$2+0+1$}
\put(10.65,3.5){${\rm S}_1$}
\put(11.75,3.5){J}
\put(10,2){\framebox(1,1){}}\put(11,2){\framebox(2,1){}}
\put(10.5,1){$1_1+2_1$}
\put(10.5,0){$1+2$}
\put(15.65,3.5){${\rm S}_2$}
\put(16.65,3.5){${\rm S}_1$}
\put(15,2){\framebox(1,1){}}\put(16,2){\framebox(1,1){}}\put(17,2){\framebox(1,1){}}
\put(15,1){$1_1 + 1_2 + 1_1$}
\put(15,0){$1+0+1+1$}
\end{picture}
\caption{Four sequences of two ordered choices from the options J, ${\rm S}_1$, ${\rm S}_2$, and the corresponding 2-compositions of 3 (with both definitions).}
\label{4comps}
\end{figure}

The simplicity of $c^k(n) = (k+1)^{n-1}$ suggests that these multicompositions are a compelling generalization of standard compositions.

The $k=1$ case gives the $2^{n-1}$ standard compositions of $n$ and here the bijection reduces to a combinatorial argument given by MacMahon; see \cite{hm} for additional background.  Andrews establishes the $c^k(n)$ value also starting from MacMahon's bijection, but he proceeds in a different way \cite[Lemma 3]{a}.  

Note that our generating function starts from $n=1$, rather than $n=0$, as this leads to simpler expressions later and avoids requiring us to ponder the empty composition.

The bijection given in the proof is compatible with the internal zeros definition as well: ${\rm S}_m$ with $1 \le m \le k$ corresponds to inserting $m-1$ zeros in the sum before starting the next positive part.  Figure \ref{4comps} 
shows the multicompositions with both definitions.

\section{Counting multicompositions by various parts} \label{sec3}
Counting the number of multicompositions with a given number of parts is a more nuanced undertaking than for standard compositions.  Using the internal zeros definition of multicompositions, one could be interested in the number of all parts, the number of positive parts, or the number of zeros.  In this section, we address each of these variants in turn.  For each, we determine both a recurrence relation and a direct formula.  These results provide new combinatorial interpretations for several known integer sequences.  

Also, each subsection includes triangles of numbers for the $k=2,3$ cases, where row $n$ partitions the $k$-compositions of $n$ and the columns reference the number of various parts.  Therefore the row sums are $(k+1)^{n-1}$.

First, we show that these counts have an interpretation in the colored parts definition of multicompositions, although it is less a natural statistic in that setting.
\begin{lemma} \label{colorzero}
Comparing a multicomposition under the two definitions, the number of zeros equals the sum of $(\text{color } - 1)$ over the colored parts.
\end{lemma}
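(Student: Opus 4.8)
The plan is to unpack the part-by-part bijection between the two definitions that was just established. Recall that a colored part with value $c$ and color $\ell$ corresponds to the string $\overbrace{0+\cdots+0}^{\ell-1 \text{ zeros}}+c$ in the internal-zeros version. So first I would fix a multicomposition and write it in colored-parts form as $c^{(1)}_{\ell_1}+c^{(2)}_{\ell_2}+\cdots+c^{(r)}_{\ell_r}$, noting that the color restriction forces $\ell_1=1$.

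Next, applying the bijection to each part in turn, the internal-zeros form of the same multicomposition is the concatenation of the blocks $\overbrace{0+\cdots+0}^{\ell_j-1 \text{ zeros}}+c^{(j)}$ for $j=1,\ldots,r$. The key observation is that every zero in this concatenation lies in exactly one block: precisely $\ell_j-1$ zeros sit immediately before the value $c^{(j)}$, and no zeros occur anywhere else. Summing over the blocks, the total number of zeros is $\sum_{j=1}^{r}(\ell_j-1)$, which is exactly the sum of $(\text{color}-1)$ over the colored parts. Since $\ell_1=1$, the first part contributes $0$ to this sum, consistent with the fact that the internal-zeros definition permits no leading zeros.

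The only point needing (routine) care is that the blocks do not interfere with one another: the $\ell_j-1$ zeros attached to part $j$ fall strictly between $c^{(j-1)}$ and $c^{(j)}$, so none of them gets miscounted against a neighboring part and the bookkeeping $\sum_{j}(\ell_j-1)$ is exact. Because the correspondence is defined separately on each part, this is immediate, and there is no deeper obstacle — the lemma is essentially a restatement of the bijection, recorded because the "number of zeros" statistic will recur in the colored-parts picture in later sections.
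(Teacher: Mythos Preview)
Your argument is correct and essentially the same as the paper's: both unpack the part-by-part bijection and observe that a part of color $\ell$ corresponds to exactly $\ell-1$ zeros, then sum over parts. The only cosmetic difference is that the paper phrases this via the $\mathrm{S}_m$ markers from the proof of Proposition~\ref{p2} (each $\mathrm{S}_m$ contributes $m-1$ zeros or, equivalently, color $m$), whereas you work directly with the block description $c_\ell \leftrightarrow 0+\cdots+0+c$; these are two descriptions of the same map.
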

\begin{proof}
In the bijection of the proof of Proposition \ref{p2}, each part beyond the first starts with a marker ${\rm S}_m$.  Depending on the definition in use, this contributes $m-1$ zeros before the next part or colors the next part $m$.  The first part contributes no zeros and must have color 1.  
\end{proof}

\subsection{Number of all parts}
A standard composition of $n$ has up to $n$ parts.  Since a $k$-composition of $n$ can have up to $k-1$ zeros in up to $n-1$ positions between successive positive parts, it can have up to $nk - k + 1$ parts.  Write $C^k(n,\ell)$ for the set of $k$-compositions of $n$ with $\ell$ parts and $c^k(n,\ell)$ for the number of them.  Triangles of these values for $k = 2,3$ through $n=4$ are given in Table \ref{totalTnl}
; the leftmost column in each triangle corresponds to $\ell=1$ since every $k$-composition has at least one part.
\begingroup
\begin{table}[t] 
\renewcommand{\arraystretch}{1.3}
\setlength{\tabcolsep}{5pt}
\begin{center}
(a) \begin{tabular}{ccccccc}
1 \\
1 & 1 & 1  \\
1 & 2 & 3 & 2 & 1  \\
1 & 3 & 6 & 7 & 6 & 3 & 1 
\end{tabular} 
(b) \begin{tabular}{cccccccccc}
1 \\
1 & 1 & 1 & 1 \\
1 & 2 & 3 & 4 & 3 & 2 & 1 \\
1 & 3 & 6 & 10 & 12 & 12 & 10 & 6 & 3 & 1
\end{tabular}
\end{center}
\caption{Triangles of $c^k(n,\ell)$ values where the row matches the sum $n$ and the column indicates the number of parts $\ell$, for (a) $k=2$ and (b) $k=3$.  These are the trinomial \cite[A027907]{o} and quadrinomial \cite[A008287]{o} coefficients, respectively.}
\label{totalTnl}
\end{table}
\endgroup

Let $\binom{n}{\ell}_k = [x^\ell](1+x+\cdots+x^k)^n$ where $[x^\ell]$ denotes the coefficient of $x^\ell$ in the subsequent polynomial.  These are known as multinomial (or polynomial) coefficients and generalize binomial coefficients, the $k=1$ case.  
\begin{proposition} \label{allparts}
The number of $k$-compositions of $n$ with $\ell$ parts satisfies the recurrence
\[c^k(n,\ell)  = c^k(n-1,\ell-k) + \cdots + c^k(n-1,\ell-1)+ c^k(n-1,\ell)\]
and $c^k(n,\ell) = \binom{n-1}{\ell - 1}_k$.
\end{proposition}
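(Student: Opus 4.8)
The plan is to extend the tiling bijection from the proof of Proposition~\ref{p2}. Recall that a $k$-composition of $n$ corresponds to a length-$n$ board with a marker from $\{{\rm J}, {\rm S}_1, \ldots, {\rm S}_k\}$ placed at each of the $n-1$ internal positions; under the internal-zeros reading, ${\rm S}_m$ contributes $m-1$ zeros followed by the start of a new positive part. I would first record how the total number of parts is read off from a marker sequence: the first positive part is always present (contributing $1$), each ${\rm J}$ contributes $0$ new parts, and each ${\rm S}_m$ contributes exactly $m$ parts (namely $m-1$ zeros and one positive part). So a $k$-composition of $n$ with marker multiset having $a_0$ copies of ${\rm J}$ and $a_m$ copies of ${\rm S}_m$ for $1 \le m \le k$ has $\ell = 1 + \sum_{m=1}^k m\,a_m$ parts, subject to $a_0 + a_1 + \cdots + a_k = n-1$.

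From here the direct formula is immediate: the number of marker sequences of length $n-1$ in which the weights $0, 1, \ldots, k$ (attached to ${\rm J}, {\rm S}_1, \ldots, {\rm S}_k$ respectively) sum to $\ell - 1$ is exactly the coefficient of $x^{\ell-1}$ in $(1 + x + x^2 + \cdots + x^k)^{n-1}$, i.e. $\binom{n-1}{\ell-1}_k$. That is precisely the generating-function description of the multinomial coefficients quoted just before the proposition, so I would state this as the clean bijective count.

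For the recurrence I would give a combinatorial argument by conditioning on the last (rightmost, position $n-1$) marker of a $k$-composition of $n$ with $\ell$ parts. Deleting that marker and the final square of the board yields a $k$-composition of $n-1$. If the deleted marker was ${\rm J}$, the part count is unchanged, giving $c^k(n-1,\ell)$; if it was ${\rm S}_m$, the $m$ parts it contributed disappear, giving $c^k(n-1,\ell-m)$ for each $1 \le m \le k$. Summing over the $k+1$ cases yields $c^k(n,\ell) = c^k(n-1,\ell) + c^k(n-1,\ell-1) + \cdots + c^k(n-1,\ell-k)$, with the convention $c^k(\cdot,j)=0$ for $j \le 0$ handling short boards. (One should note the edge case $n=1$, where there are no internal positions and $c^k(1,\ell) = [\ell=1]$, matching $\binom{0}{\ell-1}_k$.)

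I do not expect a serious obstacle here; the only thing requiring care is the bookkeeping of what ``number of parts'' means in each of the two definitions. The real content is the observation that ${\rm S}_m$ contributes exactly $m$ parts under the internal-zeros reading, which is what makes the weight $m$ on the variable $x$ in $(1+x+\cdots+x^k)^{n-1}$ the right thing; once that is pinned down, both the recurrence and the closed form fall out of standard manipulations. Alternatively, one can derive the recurrence algebraically from the generating-function identity $(1+x+\cdots+x^k)^{n-1} = (1+x+\cdots+x^k)(1+x+\cdots+x^k)^{n-2}$ and extract coefficients, but the bijective conditioning on the last marker is cleaner and keeps the combinatorial flavor of the section.
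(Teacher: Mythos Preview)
Your proposal is correct and essentially matches the paper's proof. The direct formula argument is identical, and your recurrence argument---conditioning on the last marker ${\rm J}$ or ${\rm S}_m$ and deleting it together with the final square---is the marker-language version of the paper's composition-language argument (condition on whether the last part exceeds~$1$; if so shrink it, if not strip the trailing $1$ and its preceding zeros), the two being the same bijection read through the tiling correspondence.
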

\begin{proof}
For the recurrence, we establish a bijection between the multicompositions counted on each side of the equation.  Given a $k$-composition in the set $C^k(n-1,\ell-j)$ with $1 \le j \le k$, append $j$ terms, namely $j-1$ zeros and a 1.  Given a $k$-composition in $C^k(n-1,\ell)$, increase the last part by 1.  Both of these constructions give $k$-compositions with sum $n$ and $\ell$ parts.  For the inverse, if the last part of a $k$-composition in $C^k(n,\ell)$ is greater than 1, decrease it by 1; otherwise remove the final 1 and any zeros between it and the previous positive part.

For the direct formula, consider $(1+x+\cdots+x^k)^{n-1}$: The choice of summand for each factor corresponds to putting a J (from the choice 1) or ${\rm S}_m$ (from the choice $x^m$) in each of the $n-1$ internal positions of a length $n$ board.  Each $x^m$ then contributes $m$ parts to the resulting multicomposition: $m-1$ zeros and a new positive part.  Therefore, remembering the initial part that has no J or ${\rm S}_m$ marker, $\binom{n-1}{\ell-1}_k = [x^{\ell-1}](1+x+\cdots+x^k)^{n-1}$ gives the count for $k$-compositions of $n$ with $\ell$ parts.
\end{proof}

For a more combinatorial connection to multinomial coefficients, one can use Comtet's description that $\binom{n}{\ell}_k$ counts length $\ell$ multisets of $n$ letters each appearing at most $k$ times \cite[p. 77]{co}.  It is not difficult to describe a bijection between these multisets and the multicomposition tilings with J and ${\rm S}_m$ markers described in the proof of Proposition \ref{p2}.

\subsection{Number of positive parts}
Like standard compositions, a multicomposition of $n$ can have up to $n$ positive parts.  But with various arrangements of uncounted zeros, there are more possibilities.  Write $C_+^k(n,\ell)$ for the set of $k$-compositions of $n$ with $\ell$ positive parts, $c_+^k(n,\ell)$ for the count.  Triangles of these values for $k = 2,3$ through $n=5$ are given in Table \ref{posTnl}
; the leftmost column in each triangle corresponds to $\ell=1$ since every $k$-composition has at least one positive part.
\begingroup
\begin{table}[t] 
\renewcommand{\arraystretch}{1.3}
\setlength{\tabcolsep}{5pt}
\begin{center}
(a) \begin{tabular}{ccccc}
1 \\
1 & 2 \\
1 & 4 & 4 \\
1 & 6 & 12 & 8 \\
1 & 8 & 24 & 32 & 16
\end{tabular} 
(b) \begin{tabular}{ccccc}
1 \\
1 & 3 \\
1 & 6 & 9 \\
1 & 9 & 27 & 27 \\
1 & 12 & 54 & 108 & 81
\end{tabular} 
\end{center}
\caption{Triangles of $c_+^k(n,\ell)$ values where the row matches the sum $n$ and the column indicates the number of positive parts $\ell$, for (a) $k=2$ and (b) $k=3$.  These match \cite[A013609, A013610]{o}, respectively.}
\label{posTnl}
\end{table}
\endgroup
\begin{proposition} \label{posparts}
The number of $k$-compositions of $n$ with $\ell$ positive parts satisfies the recurrence
\[c_+^k(n,\ell)  = k c_+^k(n-1,\ell-1)+ c_+^k(n-1,\ell)\]
and $c_+^k(n,\ell) = k^{\ell-1} \binom{n-1}{\ell - 1}$.
\end{proposition}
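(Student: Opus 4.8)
The plan is to mirror the structure of the proof of Proposition~\ref{allparts}: prove the recurrence by an explicit bijection between the multicompositions counted on each side, and then obtain the closed form either from that recurrence via Pascal's rule or, more transparently, by decomposing a multicomposition into an underlying standard composition together with independent choices of how many zeros to insert in each gap. I will use the internal-zeros description throughout, and the tiling model of Proposition~\ref{p2} for the second argument.

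For the recurrence, I would define a map into $C_+^k(n,\ell)$ from the disjoint union of $\{1,\dots,k\} \times C_+^k(n-1,\ell-1)$ and $C_+^k(n-1,\ell)$ as follows. Given a $k$-composition in $C_+^k(n-1,\ell-1)$ and an index $j$ with $1 \le j \le k$, append $j-1$ zeros and then a $1$; this raises the sum by $1$ and the number of positive parts by $1$, so the result lies in $C_+^k(n,\ell)$, and each composition has exactly $k$ such extensions. Given a $k$-composition in $C_+^k(n-1,\ell)$, increase its last positive part by $1$; again the result is in $C_+^k(n,\ell)$. The inverse reads off the last positive part $p$ of a member of $C_+^k(n,\ell)$: if $p \ge 2$, subtract $1$ to land in $C_+^k(n-1,\ell)$; if $p = 1$, delete that final $1$ together with the (at most $k-1$) zeros between it and the previous positive part, recording their number to recover $j$, and land in $C_+^k(n-1,\ell-1)$. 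These are mutually inverse; the only case needing a glance is $\ell = 1$, where the sole positive part of an element of $C_+^k(n,1)$ with $n \ge 2$ equals $n \ge 2 > 1$ (no zeros can occur before a first and only positive part), so the $p=1$ branch never arises and the term $k\,c_+^k(n-1,0)$ correctly contributes nothing. This gives $c_+^k(n,\ell) = k\,c_+^k(n-1,\ell-1) + c_+^k(n-1,\ell)$.

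For the direct formula I would argue that a $k$-composition of $n$ with $\ell$ positive parts is obtained uniquely by choosing its sequence $p_1,\dots,p_\ell$ of positive parts---an ordinary composition of $n$ into $\ell$ parts, of which there are $\binom{n-1}{\ell-1}$---and then deciding, for each of the $\ell-1$ positive parts after the first, how many zeros (from $0$ to $k-1$) immediately precede it, with no zeros allowed before $p_1$ and none possible after $p_\ell$. These $\ell-1$ choices are independent with $k$ options each, so $c_+^k(n,\ell) = k^{\ell-1}\binom{n-1}{\ell-1}$. Equivalently, in the tiling model one picks which $\ell-1$ of the $n-1$ internal positions carry an $\mathrm{S}$-marker (the remaining ones being $\mathrm{J}$) and assigns each of them a color in $\{1,\dots,k\}$. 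As a consistency check, $k^{\ell-1}\binom{n-1}{\ell-1} = k^{\ell-1}\bigl(\binom{n-2}{\ell-2} + \binom{n-2}{\ell-1}\bigr) = k\cdot k^{\ell-2}\binom{n-2}{\ell-2} + k^{\ell-1}\binom{n-2}{\ell-1}$, which is exactly the recurrence, so either derivation suffices.

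I do not expect a serious obstacle; the work is essentially bookkeeping. The one point requiring care is confirming that the two branches of the forward map in the recurrence have disjoint images and together exhaust $C_+^k(n,\ell)$---this is precisely the dichotomy ``last positive part equals $1$'' versus ``exceeds $1$''---and that ``delete the final $1$ and the zeros before it'' refers unambiguously to the zeros in a single gap, which it does because in the internal-zeros form zeros occur only directly in front of a positive part.
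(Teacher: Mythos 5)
Your proposal is correct and follows essentially the same route as the paper: the recurrence is proved by the same append-a-$1$-in-one-of-$k$-ways versus increase-the-last-part bijection (you phrase it with internal zeros, the paper with colored parts, which the paper's own correspondence identifies), and the closed form comes from the same count of choosing $\ell-1$ of the $n-1$ internal positions for ${\rm S}_m$ markers with $k$ color choices each. The only difference is cosmetic: the paper additionally notes the generating-function shortcut $c_+^k(n,\ell)=[x^{\ell-1}](1+kx)^{n-1}$, which you replace by a direct Pascal-rule consistency check.
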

\begin{proof}
Here we use the colored parts definition of multicompositions, where each part is positive.  

For the recurrence, given a $k$-composition in $C_+^k(n-1,\ell)$, increase the last part by 1.  Given a multicomposition in $C_+^k(n-1,\ell-1)$, there are $k$ possibilities for adding a 1, from $1_1$ to $1_k$.  These constructions give a $k$-composition with sum $n$ and $\ell$ (positive) parts.  The inverse is clear, conditioned on whether the last part is greater than 1.

For the direct formula, we count the ways to construct a $k$-composition of $n$ with $\ell$ parts from a length $n$ board.  To have $\ell$ parts, we need to place an ${\rm S}_m$ marker at $\ell-1$ of the $n-1$ internal positions; the positions can be chosen in $\binom{n-1}{\ell-1}$ ways.  There are $k$ choices of $m$ for each ${\rm S}_m$, giving the $k^{\ell-1}$ factor. 

We can also modify $c^k(n,\ell) = [x^{\ell-1}](1+x+\cdots+x^k)^{n-1}$ to determine the direct formula.  Since we are not counting zeros now, each $x^i$ contributes one positive part regardless of $i$. Therefore $c_+^k(n,\ell) = [x^{\ell-1}](1+kx)^{n-1}$ and the result follows from the binomial theorem.
 \end{proof}

\subsection{Number of zeros}
A multicomposition of $n$ can have up to $(n-1)(k-1)$ zeros.  Write $C_0^k(n,\ell)$ for the set of $k$-compositions of $n$ with $\ell$ zeros, $c_0^k(n,\ell)$ for the count.  Triangles of these values for $k = 2,3$ through $n=5$ are given in Table \ref{zeroTnl}
; the leftmost column in each triangle corresponds to $\ell=0$ since not all $k$-compositions includes zeros.  Since multicompositions with no zeros are the standard compositions, $c_0^k(n,0) = 2^{n-1}$.
\begingroup
\begin{table}[t] 
\renewcommand{\arraystretch}{1.3}
\setlength{\tabcolsep}{5pt}
\begin{center}
(a) \begin{tabular}{ccccc}
1 \\
2 & 1 \\
4 & 4 & 1 \\
8 & 12 & 6 & 1 \\
16 & 32 & 24 & 8 & 1
\end{tabular} 
(b) \begin{tabular}{ccccccccc}
1 \\
2 & 1 & 1 \\
4 & 4 & 5 & 2 & 1 \\
8 & 12 & 18 & 13 & 9 & 3 & 1 \\
16 & 32 & 56 & 56 & 49 & 28 & 14 & 4 & 1
\end{tabular} 
\end{center}
\caption{Triangles of $c_0^k(n,\ell)$ values where the row indicates the sum $n$ and the column indicates the number of zeros $\ell$, for (a) $k=2$ and (b) $k=3$.  These match \cite[A038207, A336996]{o}, respectively.}
\label{zeroTnl}
\end{table}
\endgroup
\begin{proposition} \label{zeroparts}
The number of $k$-compositions of $n$ with $\ell$ zeros satisfies the recurrence
\[c_0^k(n,\ell)  = c_0^k(n-1,\ell-k+1) + \cdots + c_0^k(n-1,\ell-1)+ 2c_0^k(n-1,\ell)\]
and 
\[c_0^k(n,\ell) = \sum_{m=0}^{n-1} \binom{n-1}{m} \binom{m}{\ell}_{k-1}.\]
\end{proposition}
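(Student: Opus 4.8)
The plan is to treat the two claims as in Propositions~\ref{allparts} and~\ref{posparts}: prove the recurrence by a sum-preserving bijection between the $k$-compositions counted on the two sides, and prove the closed form by refining the marker count from the proof of Proposition~\ref{p2}.

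For the recurrence I would use the internal zeros definition and read the right-hand side as $2\,c_0^k(n-1,\ell) + \sum_{t=1}^{k-1} c_0^k(n-1,\ell-t)$. I would build elements of $C_0^k(n,\ell)$ by two operations: (i) from a $k$-composition in $C_0^k(n-1,\ell)$, increase its last positive part by $1$; and (ii) from a $k$-composition in $C_0^k(n-1,\ell-t)$ with $0 \le t \le k-1$, append $t$ zeros followed by a new part $1$. Operation (ii) with $t=0$ and operation (i) give the two copies of $c_0^k(n-1,\ell)$, while (ii) with $t=1,\dots,k-1$ gives the remaining summands; in every case the output has sum $n$ and exactly $\ell$ zeros, and it is legal because a $k$-composition has a positive first part, so the appended zeros are never leading. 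For the inverse, given a $k$-composition in $C_0^k(n,\ell)$ with $n \ge 2$: if its last positive part is at least $2$, subtract $1$ from it, undoing (i); if the last positive part is $1$, delete that $1$ together with the $t \le k-1$ zeros immediately before it, undoing (ii). Since the image of (i) is exactly the $k$-compositions of $n$ whose last part is $\ge 2$, and the image of (ii) for fixed $t$ is exactly those whose last part is $1$ preceded by exactly $t$ zeros, the pieces are disjoint and exhaustive, so the map is a bijection and the recurrence follows (with base case $c_0^k(1,\ell)=1$ for $\ell=0$ and $0$ otherwise).

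For the closed form I would return to the tilings of a length-$n$ board from the proof of Proposition~\ref{p2}: a $k$-composition is a choice of marker $\mathrm{J}, \mathrm{S}_1, \dots, \mathrm{S}_k$ at each of the $n-1$ internal positions, and by Lemma~\ref{colorzero} its number of zeros is $\sum(m-1)$ taken over the markers $\mathrm{S}_m$. To obtain exactly $\ell$ zeros, first choose which $m$ of the $n-1$ positions carry an $\mathrm{S}$-marker (the rest carry $\mathrm{J}$), in $\binom{n-1}{m}$ ways; then assign subscripts $s_1,\dots,s_m \in \{1,\dots,k\}$ to those positions with $\sum(s_i-1)=\ell$, and since each $s_i-1$ ranges over $\{0,\dots,k-1\}$ the number of such assignments is $[x^\ell](1+x+\cdots+x^{k-1})^m = \binom{m}{\ell}_{k-1}$. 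Summing over $m$ from $0$ to $n-1$ yields $c_0^k(n,\ell) = \sum_{m=0}^{n-1}\binom{n-1}{m}\binom{m}{\ell}_{k-1}$. Equivalently, one may read $\binom{n-1}{m}$ as the number of standard compositions of $n$ with $m+1$ parts and $\binom{m}{\ell}_{k-1}$ as the number of ways to insert at most $k-1$ zeros before each of the last $m$ parts so that $\ell$ zeros are used in all. As a consistency check, Pascal's rule together with $\sum_{t=0}^{k-1}\binom{m}{\ell-t}_{k-1} = \binom{m+1}{\ell}_{k-1}$ shows directly that this sum satisfies the recurrence.

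I expect the only real obstacle to be the bookkeeping in the recurrence bijection: verifying that the two constructions out of $C_0^k(n-1,\ell)$ have disjoint images, that appending zeros never creates an illegal leading zero, and that the phrase ``the zeros immediately before the final $1$'' is unambiguous (which uses $n \ge 2$ and the positive first part). Everything else is a routine unwinding of the marker picture and of the definition of the multinomial coefficients.
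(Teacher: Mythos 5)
Your proposal is correct and follows essentially the same route as the paper: the recurrence bijection (increase the last part, or append $t$ zeros and a new part $1$, with the same inverse) is identical, and your closed-form count is the combinatorial reading of the paper's step, which recasts the marker polynomial as $[z^\ell](2+z+\cdots+z^{k-1})^{n-1}$, writes $2+z+\cdots+z^{k-1}=1+(1+z+\cdots+z^{k-1})$, and applies the binomial theorem --- exactly your split into $\mathrm{J}$-positions versus $\mathrm{S}$-positions with $\binom{n-1}{m}\binom{m}{\ell}_{k-1}$. No gaps; your explicit verification of disjointness and the leading-zero issue is just a more detailed write-up of what the paper leaves implicit.
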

\begin{proof}
For the recurrence, given a $k$-composition in the set $C_0^k(n-1,\ell-j)$ with $1 \le j \le k-1$, append $j$ zeros and then a 1.  Given a $k$-composition in $C_0^k(n-1,\ell)$, construct two multicompositions: increase the last part by 1 for one and append a 1 (with no zeros) for the other.  These constructions give $k$-compositions with sum $n$ and $\ell$ zeros.  For the inverse, if the last part of a $k$-composition in $C_0^k(n,\ell)$ is greater than 1, decrease it by 1; otherwise remove the final 1 and any zeros between it and the previous positive part.

For the direct formula, we modify $c^k(n,\ell) = [x^{\ell-1}](1+x+\cdots+x^k)^{n-1}$.  Recall that the marker ${\rm S}_m$ placed on a length $n$ board leads to $m-1$ zeros; neither J nor ${\rm S}_1$ contribute any zeros.  We recast the polynomial in $z$ to highlight counting zeros, where $x^m$ becomes $z^{m-1}$, and $1+x$ (for J and ${\rm S}_1$) becomes $2z^0$.  Thus $c_0^k(n,\ell) = [z^\ell](2+z+\cdots+z^{k-1})^{n-1}$.  To determine that coefficient, write $(2+z+\cdots+z^{k-1})^{n-1}$ as $(1+(1+z+\cdots+z^{k-1}))^{n-1}$, apply the binomial theorem, and use the definition of multinomial coefficients.  
\end{proof}

\section{Restricted part multicompositions} \label{sec4}
There are many interesting classes of standard compositions that arise by restricting which parts are allowed.  We review three such restrictions that lead to Fibonacci sequence counts and then explore their analogues for multicompositions.

Recall the Fibonacci sequence $F(0) = 0$, $F(1) = 1$, and $F(n) = F(n-1)+F(n-2)$ for $n \ge 2$.  We focus on three types of restricted compositions of $n$: those made with just parts 1 and 2, denoted $C_{12}(n)$; those with only odd parts, $C_{\text{odd}}(n)$; and those where 1 is not allowed as a part, $C_{\hat{1}}(n)$.  There will be a superscript $k$ added for $k$-compositions, where zeros satisfying the definition are also allowed.  Again, lower case letters refer to the sizes of the corresponding sets.
\begin{proposition} For $n \ge 1$, the numbers of restricted compositions of $n$ as defined above are $c_{12}(n) = F(n+1)$, $c_{\text{odd}}(n) = F(n)$, and $c_{\hat{1}}(n) = F(n-1)$.
\end{proposition}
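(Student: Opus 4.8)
The plan is to show that each of the three counting sequences satisfies the Fibonacci recurrence $a(n) = a(n-1) + a(n-2)$ (for $n \ge 3$) together with base values at $n = 1, 2$ matching the claimed index shift; the three identities then follow by a two-line induction from the defining recurrence of $F$. In each case the recurrence will come from a bijection that splits the relevant set of restricted compositions according to its last part.

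First I would treat $C_{12}(n)$. A composition of $n$ into parts $1$ and $2$ either ends in a $1$, and deleting that final part gives an arbitrary element of $C_{12}(n-1)$, or ends in a $2$, and deleting it gives an arbitrary element of $C_{12}(n-2)$; this correspondence is clearly a bijection, so $c_{12}(n) = c_{12}(n-1) + c_{12}(n-2)$ for $n \ge 3$, and since $c_{12}(1) = 1 = F(2)$ and $c_{12}(2) = 2 = F(3)$ we get $c_{12}(n) = F(n+1)$. For $C_{\text{odd}}(n)$ I would instead split according to whether the last part equals $1$: if it does, delete it to land in $C_{\text{odd}}(n-1)$; if the last part is at least $3$, subtract $2$ from it, so that it remains odd and positive, landing in $C_{\text{odd}}(n-2)$. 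Both maps are reversible, giving $c_{\text{odd}}(n) = c_{\text{odd}}(n-1) + c_{\text{odd}}(n-2)$ for $n \ge 3$; with $c_{\text{odd}}(1) = 1 = F(1)$ and $c_{\text{odd}}(2) = 1 = F(2)$ (only $1+1$) this yields $c_{\text{odd}}(n) = F(n)$. For $C_{\hat{1}}(n)$ I would split according to whether the last part equals $2$: if it does, delete it to land in $C_{\hat{1}}(n-2)$; if the last part is at least $3$, subtract $1$ from it, so that it remains at least $2$, landing in $C_{\hat{1}}(n-1)$. Hence $c_{\hat{1}}(n) = c_{\hat{1}}(n-1) + c_{\hat{1}}(n-2)$ for $n \ge 3$, and with $c_{\hat{1}}(1) = 0 = F(0)$ and $c_{\hat{1}}(2) = 1 = F(1)$ (only the single part $2$) this yields $c_{\hat{1}}(n) = F(n-1)$.

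I expect the only real obstacle to be bookkeeping: verifying that each ``operate on the last part'' map is genuinely a bijection onto the \emph{entire} target set, paying attention to the degenerate cases where a composition consists of a single part, and confirming that the three base-case checks land on the correct Fibonacci index in each shift. As a uniform alternative, one can bypass the case analysis by computing generating functions: summing the geometric series over the allowed part sets gives $\sum_{n \ge 1} c_{12}(n) x^n = (x + x^2)/(1 - x - x^2)$, $\sum_{n \ge 1} c_{\text{odd}}(n) x^n = x/(1 - x - x^2)$, and $\sum_{n \ge 1} c_{\hat{1}}(n) x^n = x^2/(1 - x - x^2)$, each of which is a shift of the Fibonacci generating function $x/(1 - x - x^2) = \sum_{n \ge 1} F(n) x^n$, immediately giving all three formulas. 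One could also remark that the three sets are equinumerous via simple size-one and size-two shifts, but the recurrence argument already exposes the common Fibonacci structure.
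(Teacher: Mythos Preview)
Your proposal is correct and matches the paper's approach: the paper defers this proposition to the $k=1$ specialization of the next result (Proposition~\ref{seqdiv}), whose proof uses exactly your last-part bijections and the same generating function computations. The only cosmetic difference is that the paper states the $c_{\hat{1}}$ recurrence from $n\ge 4$ with base cases at $n=2,3$, whereas you start it at $n\ge 3$ using $c_{\hat{1}}(1)=0$; both are fine.
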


Proofs of these counts will follow from the $k=1$ cases of the following results on restricted multicompositions.  The history here is interesting.  The connection between Fibonacci numbers and sums using just 1 and 2 dates from ancient India in the study of poetry meters in languages with long and short vowels, the long vowels twice the length of short vowels \cite{s}.  De Morgan discussed compositions with only odd parts in 1846 \cite[Appendix 10]{d} and Cayley documented compositions with 1 prohibited in 1876 \cite{ca}.
\begin{proposition} \label{seqdiv}
Let a positive integer $k$ be given.
\begin{enumerate}
\item[(a)] The number of $k$-compositions of $n$ with only positive parts 1 and 2 satisfies 
\[c^k_{12}(1) = 1, c^k_{12}(2) = k+1, \text{and } c^k_{12}(n) =  k c^k_{12}(n-1) + k c^k_{12}(n-2) \text{ for } n \ge 3.\]
\item[(b)] The number of $k$-compositions of $n$ with only positive parts odd satisfies 
\[c^k_{\text{odd}}(1) = 1, c^k_{\text{odd}}(2) = k, \text{and } c^k_{\text{odd}}(n) =  k c^k_{\text{odd}}(n-1) + c^k_{\text{odd}}(n-2) \text{ for } n \ge 3.\]
\item[(c)] The number of $k$-compositions of $n$ with no parts 1 satisfies 
\[c^k_{\hat{1}}(2) = 1, c^k_{\hat{1}}(3) = 1, \text{and } c^k_{\hat{1}}(n) =  c^k_{\hat{1}}(n-1) + k c^k_{\hat{1}}(n-2) \text{ for } n \ge 4.\]
\end{enumerate}
\end{proposition}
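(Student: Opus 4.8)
The plan is to prove all three recurrences by a single combinatorial device, working throughout with the internal-zeros definition of $k$-compositions. For a fixed restriction I classify a restricted $k$-composition of $n$ according to the value of its \emph{last positive part}, split the compositions into the two buckets that appear in the corresponding recurrence, and exhibit a bijection realizing each summand. The initial values are checked by direct enumeration: for instance, the $k$-compositions of $2$ using only parts $1$ and $2$ are $2$ together with $1+\underbrace{0+\cdots+0}_{j}+1$ for $0\le j\le k-1$, so $c^k_{12}(2)=k+1$, and the remaining base cases are equally short.

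Two elementary operations do all the work. \emph{Appending}: given a nonempty $k$-composition $\alpha$ and a positive value $p$, for each $j$ with $0\le j\le k-1$ form $\alpha$ followed by $j$ zeros and then $p$; since $\alpha$ is nonempty the new part $p$ is not the first part, so all $k$ choices of $j$ are legal and distinct pairs $(\alpha,j)$ give distinct results. Its inverse deletes the final positive part together with the uniquely determined block of zeros immediately preceding it, and the truncated string is again a legitimate $k$-composition because it still ends in a positive part and still has no zeros before its first part. \emph{Adjusting}: change the last positive part by a fixed amount, leaving every zero in place; this is a bijection onto those restricted $k$-compositions whose last positive part lies in the shifted range. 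For each recurrence one checks that in the stated range of $n$ the restriction forces any one-part $k$-composition to be a single allowed part too small to sum to $n$, so that every composition in play has at least two positive parts and the deletion map is well-defined with nonempty output.

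Carrying this out: in (a) the last positive part is $1$ or $2$; deleting a trailing $1$ with its preceding zeros, inverted by appending ($k$ ways), gives $k\,c^k_{12}(n-1)$, and deleting a trailing $2$ with its preceding zeros gives $k\,c^k_{12}(n-2)$. In (b) the last positive part is $1$, or is odd and at least $3$; the first subcase again yields $k\,c^k_{\text{odd}}(n-1)$ by appending, and in the second subcase subtracting $2$ from the last part keeps it odd and positive, an adjusting bijection giving $c^k_{\text{odd}}(n-2)$. In (c) the last positive part is $2$ or is at least $3$; deleting a trailing $2$ with its preceding zeros yields $k\,c^k_{\hat{1}}(n-2)$ by appending, while subtracting $1$ from a last part that is at least $3$ keeps it at least $2$, so no forbidden $1$ is created, yielding $c^k_{\hat{1}}(n-1)$. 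In every case the two buckets are disjoint and exhaustive, which is exactly the recurrence; the thresholds $n\ge 3$ in (a) and (b) and $n\ge 4$ in (c) are precisely what makes both initial values available and keeps the deleted composition in the admissible range.

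The main obstacle is bookkeeping rather than ideas: one must check that the block of zeros preceding the last positive part is recovered unambiguously by the inverse map, that no operation ever creates or destroys a forbidden part (which is why in (c) one subtracts $1$ only when the last part is $\ge 3$, and why in (b) one subtracts $2$ rather than $1$), and that the smallest admissible values of $n$ genuinely exclude one-part compositions from the bucket being deleted. Once these edge conditions are pinned down the bijections are routine; specializing to $k=1$ then collapses each recurrence to the Fibonacci recurrence and recovers $c_{12}(n)=F(n+1)$, $c_{\text{odd}}(n)=F(n)$, and $c_{\hat{1}}(n)=F(n-1)$ of the preceding proposition.
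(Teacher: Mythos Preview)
Your proof is correct and takes essentially the same approach as the paper: classify by the last positive part and either remove it (with its preceding zeros, accounting for the factor $k$) or adjust it by a fixed amount, exactly the bijections the paper describes in the colored-parts language (appending $1_m$ or $2_m$, or extending the last part). The paper also offers generating-function derivations which you omit, and one sentence of yours---that one-part compositions cannot sum to $n$---is literally false for (b) with odd $n$, though your later and more careful formulation (one-part compositions are absent from the \emph{deletion} bucket) is the correct statement and what the argument actually needs.
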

\begin{proof}
We use here the colored parts definition of multicompositions.  Recall that the first part must have color 1.

(a) For multicompositions with only parts 1 and 2, the only possibility for sum 1 is $1_1$.  For sum 2, there are the $k+1$ possibilities, $2_1$, $1_1 + 1_1$, \dots, $1_1 + 1_k$.  The recurrence follows from the generating function, a modification of \eqref{e1}:
\[ \sum_{n \ge 1} c^k_{12}(n) x^n = \frac{x+x^2}{1-k(x+x^2)}.\]
Or one can give a combinatorial argument: $C_{12}^k(n)$ consists of each element of $C^k_{12}(n-2)$ with $2_1$, \dots, or $2_k$ appended and each element of $C^k_{12}(n-1)$ with $1_1$, \dots, or $1_k$ appended.  Details of verifying the bijection are left to the reader.

(b) For multicompositions with only odd parts, the only possibility for sum 1 is $1_1$.  For sum 2, there are the $k$ possibilities $1_1 + 1_1$, \dots, $1_1 + 1_k$.  The recurrence follows from the generating function
\[\sum_{n \ge 1} c^k_{\text{odd}}(n) x^n = \frac{x + x^3 + x^5 + \cdots}{1-k(x - x^3 - x^5 - \cdots)}
 = \frac{x\left(\frac{1}{1-x^2}\right)}{1-kx\left(\frac{1}{1-x^2}\right)}  = \frac{x}{1-kx-x^2} \]
or a combinatorial argument: $C_{\text{odd}}^k(n)$ consists of each element of $C^k_{\text{odd}}(n-2)$ with the last part extended by 2 (i.e., converted to the next larger odd number while maintaining the same color) and each element of $C^k_{\text{odd}}(n-1)$ with $1_1$, \dots, or $1_k$ appended.

(c) For multicompositions with 1 prohibited, the only possibility for sum 2 is $2_1$ and the only possibility for sum 3 is $3_1$.  The recurrence follows from the generating function
\[\sum_{n \ge 1} c^k_{\hat{1}}(n) x^n = \frac{x^2 + x^3 + x^4 + \cdots}{1-k(x^2 - x^3 - x^4 - \cdots)} 
 = \frac{x^2\left(\frac{1}{1-x}\right)}{1-kx^2\left(\frac{1}{1-x}\right)}  = \frac{x^2}{1-x-kx^2}\] 
or a combinatorial argument: $C_{\hat{1}}^k(n)$ consists of each element of $C^k_{\hat{1}}(n-2)$ with $2_1$, \dots, or $2_k$ appended and each element of $C^k_{\hat{1}}(n-1)$ with the last part extended by 1 (i.e., converted to the next larger integer, at least 3, while maintaining the same color).
\end{proof}

Thus these three restrictions, which are all counted by the Fibonacci numbers (with different indices) for standard compositions, diverge into three families of integer sequences when applied to multicompositions.  Initial terms of each sequence for $k=2,3,4$ are given in Table \ref{resseq}
.
\begingroup
\begin{table}
\renewcommand{\arraystretch}{1.3}
\setlength{\tabcolsep}{5pt}
\begin{center}
\begin{tabular}{c|cccccccc|c} \hline
sequence\textbackslash$n$ & 1 & 2 & 3 & 4 & 5 & 6 & 7 & 8 & name \\ \hline
$c^2_{12}(n)$ & 1 & 3 & 8 & 22 & 60 & 164 & 448 & 1224 & \cite[A028859]{o} \\
$c^3_{12}(n)$ & 1 & 4 & 15 & 57 & 216 & 819 & 3105 & 11772  & \cite[A125145]{o} \\ 
$c^4_{12}(n)$ &1 & 5 & 24 & 116 & 560 & 2704 & 13056 & 63040 & \cite[A086347]{o} \\ \hline
$c^2_{\text{odd}}(n)$ & 1 & 2 & 5 & 12 & 29 & 70 & 169 & 408 & Pell \\
$c^3_{\text{odd}}(n)$ & 1 & 3 & 10 & 33 & 109 & 360 & 1189 & 3927 & \cite[A006190]{o} \\ 
$c^4_{\text{odd}}(n)$ &1 & 4 & 17 & 72 & 305 & 1292 & 5473 & 23184 & \cite[A001076]{o} \\ \hline
$c^2_{\hat{1}}(n)$ & 0 & 1 & 1 & 3 & 5 & 11 & 21 & 43 & Jacobsthal \\
$c^3_{\hat{1}}(n)$ & 0 & 1 & 1 & 4 & 7 & 19 & 40 & 97 & \cite[A006130]{o} \\ 
$c^4_{\hat{1}}(n)$ & 0 & 1 & 1 & 5 & 9 & 29 & 65 & 181 & \cite[A006131]{o}\\ \hline
\end{tabular} 
\end{center}
\caption{Restricted multicomposition counts $c^k_{12}(n)$, $c^k_{\text{odd}}(n)$, and  $c^k_{\hat{1}}(n)$ for $2 \le k \le 4$ and $1 \le n \le 8$  with sequence identifiers.}
 \label{resseq}
\end{table}
\endgroup

\section{Connections to diagonal sums of triangles} \label{sec5}
A popular identity demonstrates how the Fibonacci numbers are ``inside'' Pascal's triangle: the finite sum $\sum_{i\ge0} \binom{n-i}{i} = F(n-1)$; see \cite[Identity 4]{bq} for a combinatorial proof that makes use of the restricted compositions $C_{12}(n)$.  A direct generalization shows that the diagonal sums of the $\binom{n}{\ell}_2$ triangle shown Table \ref{totalTnl}(a) 
are the tribonacci numbers \cite{hb}, etc.  It is not hard to give a bijection between the 2-compositions counted by $\sum_{i\ge0} \binom{n-i}{i}_2$ and the standard compositions with only parts 1, 2, and 3, etc.

In this section we give two results on the sum of diagonal entries of the other triangles in Section \ref{sec3}.

One can check that the diagonal sums of the $c_+^2(n,\ell)$ and $c_+^3(n,\ell)$ triangles in Table \ref{posTnl} 
match the first few terms of the sequences $c^2_{\hat{1}}(n)$ and $c^3_{\hat{1}}(n)$ of Table \ref{resseq}
, respectively.  We show that this holds in general.  But first we give a name to that family of sequences.
\begin{definition}
For a given positive integer $k$, the $k$-Jacobsthal sequence has initial conditions $J^k(0) = 0$, $J^k(1) = 1$ and, for $n \ge 2$, the recurrence 
\[J^k(n) = J^k(n-1) + k J^k(n-2).\]
\end{definition}

Note that $J^1(n) = F(n)$ and $J^2(n)$ is the standard Jacobsthal sequence \cite[A001045]{o}.  Proposition \ref{seqdiv}(c) could now be written $c^k_{\hat{1}}(n) = J^k(n-1)$; Table \ref{resseq} 
shows initial terms of $J^3(n-1)$ and $J^4(n-1)$.  Realize that many different sequences in the literature are called generalized or $k$-Jacobsthal numbers.
\begin{theorem}
Given a positive integer $k$, there is a bijection between the multicompositions $C_+^k(n,1) \cup C_+^k(n-1,2) \cup \cdots$ and $C_{\hat{1}}^k(n+1)$.  Therefore, 
\[J^k(n) = \sum_{i\ge1} k^{i-1} \binom{n-1-i}{i-1}.\]
\end{theorem}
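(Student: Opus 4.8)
The plan is to exhibit the bijection explicitly---it is the colored version of the classical correspondence between compositions into parts at least $2$ and compositions into positive parts, obtained by subtracting $1$ from each part---and then read off the identity by counting both sides. I will use the colored parts definition throughout, so a member of $C_+^k(n+1-\ell,\ell)$ is a list of $\ell$ positive parts summing to $n+1-\ell$, each carrying a color from $\{1,\dots,k\}$, with the first part forced to have color $1$. Two preliminary observations: the sets in the union are pairwise disjoint, since a multicomposition determines both its sum and its number of positive parts; and $C_+^k(n+1-\ell,\ell)$ is nonempty exactly when $1\le\ell\le (n+1)/2$, which is precisely the range of possible part counts for a member of $C_{\hat1}^k(n+1)$, whose parts are all $\ge 2$ and sum to $n+1$.

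First I would define the map on $C_+^k(n+1-\ell,\ell)$ by adding $1$ to every part and leaving all colors unchanged. The resulting list has all parts $\ge 2$, total $(n+1-\ell)+\ell = n+1$, and still has first part colored $1$, so it belongs to $C_{\hat1}^k(n+1)$. The inverse subtracts $1$ from every part of a member of $C_{\hat1}^k(n+1)$, which is legal because each part is at least $2$; this lands in $C_+^k(n+1-\ell,\ell)$, where $\ell$ is recovered as the number of parts. The two maps are obviously mutually inverse, so this is the desired bijection.

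The summation formula then drops out by counting. The bijection gives $|C_{\hat1}^k(n+1)| = \sum_{i\ge1}|C_+^k(n+1-i,i)|$. On the left, Proposition~\ref{seqdiv}(c), in the form $c_{\hat1}^k(m)=J^k(m-1)$, gives $c_{\hat1}^k(n+1)=J^k(n)$. On the right, Proposition~\ref{posparts} gives $c_+^k(n+1-i,i)=k^{i-1}\binom{n-i}{i-1}$. Equating the two totals yields $J^k(n)=\sum_{i\ge1}k^{i-1}\binom{n-i}{i-1}$, valid for $n\ge1$ with the convention that $\binom{a}{b}=0$ unless $0\le b\le a$.

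I do not expect a real obstacle: the map is elementary and visibly invertible. The only point needing care is the index bookkeeping. The key invariant is that ``sum plus number of parts'' equals the constant $n+1$ all along the union, so that the add-$1$/subtract-$1$ maps line up with the part counts of $C_{\hat1}^k(n+1)$; one should also confirm surjectivity, namely that every composition of $n+1$ into $\ell$ parts each $\ge 2$ arises from $C_+^k(n+1-\ell,\ell)$. I would verify the $k=2$ case against Tables~\ref{posTnl} and~\ref{resseq} to be sure the shift in the argument of $J^k$ is correct.
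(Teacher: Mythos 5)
Your proof is correct and is essentially the paper's own argument: the same add-$1$/subtract-$1$ bijection between $\bigcup_{i\ge1} C_+^k(n+1-i,i)$ and $C_{\hat{1}}^k(n+1)$, followed by counting via Propositions \ref{seqdiv}(c) and \ref{posparts}. One remark: the formula you derive, $J^k(n)=\sum_{i\ge1}k^{i-1}\binom{n-i}{i-1}$, is the one that actually follows from those propositions (and checks numerically, e.g.\ $J^2(3)=3$), so the binomial $\binom{n-1-i}{i-1}$ in the theorem's display appears to be an index slip in the statement rather than a gap in your argument.
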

\begin{proof}
The bijection simply consists of increasing by 1 each positive part in all the multicompositions of $\bigcup_i C_+^k(n+1-i,i)$.  This takes a multicomposition with sum $n+1-i$ having $i$ positive parts to a multicomposition of $n+1$ with no parts 1 (as every part arose from adding 1 to a positive part).  For the inverse map, given a multicomposition in $C_{\hat{1}}^k(n+1)$ with $i$ positive parts, decrease each positive part by 1 to produce a multicomposition in $C_+^k(n+1-i,i)$.

The formula for $J^k(n)$ then follows from its association with $C^k_{\hat{1}}(n)$ in Proposition \ref{seqdiv}(c) and the formula for $c_+^k(n,\ell)$ in Proposition \ref{posparts}.  
\end{proof}

For our last result, one can check that the diagonal sums of $c_0^2(n,\ell)$ match the first few terms of the sequence $c^2_{\text{odd}}(n)$ of Table \ref{resseq}
, the Pell numbers.  However, the diagonal sums of $c_0^3(n,\ell)$ do not match $c^3_{\text{odd}}(n)$.  Instead, we show that the diagonal sums of $c_0^k(n,\ell)$ count a different variety of colored compositions associated with the Pell numbers that were introduced recently by Bravo, Herrera, and Ram\'{i}rez \cite{bhr}.
\begin{definition}
For a given positive integer $k$, the $k$-Pell sequence has initial conditions $P^k(-k+2) = \cdots = P^k(0) = 0$, $P^k(1)=1$ and, for $n \ge 2$, the recurrence 
\[P^k(n) = 2 P^k(n-1) + P^k(n-2) + \cdots + P^k(n-k).\]
\end{definition}

Note that $P^2(n)$ is the standard Pell sequence \cite[A000129]{o}.  The sequences for $k=3,4,5$ are \cite[A077939, A103142, A141448]{o}, respectively.  Bravo, Herrera, and Ram\'{i}rez establish a natural combinatorial interpretation: $P^k(n+1)$ counts compositions of $n$ with parts $1, 1', 2, 3, \ldots, k$, i.e., standard compositions except that there is a second type of 1 \cite[Theorem 11]{bhr}.  Let $B^k(n)$ be the set of these compositions of $n$.
\begin{theorem}
Given an integer $k \ge 2$, there is a bijection between the multicompositions $C_0^k(n,0) \cup C_0^k(n-1,1) \cup \cdots$ and $B^k(n-1)$.  Therefore
\[ P^k(n) = \sum_{i\ge0} \sum_{m=0}^{n-i} \binom{n-i}{m} \binom{m}{i}_{k-1}.\]
\end{theorem}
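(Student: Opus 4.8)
The plan is to exhibit an explicit bijection $\Phi$ from $\bigcup_{i\ge0} C_0^k(n-i,i)$ to $B^k(n-1)$; the summation formula then drops out. Throughout I use the internal-zeros form of multicompositions. The key preliminary observation is that, writing out all entries (positive parts and zeros alike), every member of the union accounts for total weight $n$: an element of $C_0^k(n-i,i)$ has positive parts summing to $n-i$ together with $i$ zeros. So write a typical such element as
\[
\gamma = p_1,\ \underbrace{0,\dots,0}_{z_1},\ p_2,\ \underbrace{0,\dots,0}_{z_2},\ \dots,\ \underbrace{0,\dots,0}_{z_{r-1}},\ p_r ,
\]
with $p_j\ge1$, $0\le z_j\le k-1$, $\sum p_j = n-i$, and $\sum z_j = i$.

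I define $\Phi(\gamma)\in B^k(n-1)$ block by block, reading left to right. The leading part $p_1$ contributes $p_1-1$ parts equal to $1$. For each $j\ge2$, the block $0^{z_{j-1}}p_j$ contributes first a single ``big'' part, namely $1'$ if $z_{j-1}=0$ and the integer $z_{j-1}+1\in\{2,\dots,k\}$ if $z_{j-1}\ge1$, and then $p_j-1$ parts equal to $1$. The inverse is just as transparent: any composition in $B^k(n-1)$ has a unique form $A_0\,B_1\,A_1\,B_2\cdots B_t\,A_t$, where each $A_j$ is a (possibly empty) run of plain $1$'s, say of length $a_j$, and each $B_j$ is a single part that is either $1'$ or an integer in $\{2,\dots,k\}$; send it to the multicomposition with positive parts $a_0+1,\ a_1+1,\ \dots,\ a_t+1$, where between the $j$th and $(j{+}1)$st positive part we insert $z_j$ zeros, with $z_j=0$ if $B_j=1'$ and $z_j=B_j-1$ otherwise.

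First I would check that $\Phi$ actually lands in $B^k(n-1)$: its parts obviously lie in $\{1,1',2,\dots,k\}$, and summing the contributions gives $\sum_j(p_j-1)+\sum_{j\ge2}(z_{j-1}+1)=\sum p_j+\sum z_j-1=n-1$. A symmetric computation shows the inverse construction produces a multicomposition of total weight $n$, hence a member of some $C_0^k(n-i,i)$ with $i=\sum z_j$; and the two constructions are visibly mutually inverse, since each simply trades a block ``$z$ zeros then $p$'' for ``one big part of size $z+1$ (recorded as $1'$ when $z=0$) then $p-1$ ones,'' with the leading part $p_1$ treated identically but with no preceding big part. The one degenerate case to handle carefully is a single-part multicomposition $\gamma = p_1$, which maps to the all-$1$'s composition of length $p_1-1$ (empty when $p_1=1$); this is consistent with $|B^k(0)|=1=P^k(1)$ and the convention that the generating function starts at $n=1$. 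Equating cardinalities then yields $P^k(n)=|B^k(n-1)|=\sum_{i\ge0}c_0^k(n-i,i)$, and substituting the closed form for $c_0^k$ from Proposition~\ref{zeroparts} gives the displayed double sum after collecting terms.

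I expect the main work to be in \emph{finding} the local rule for $\Phi$ rather than in checking it: unlike the $k$-Jacobsthal theorem, where the bijection was a single uniform operation (add $1$ to every positive part), here the zeros of a multicomposition and the second copy of $1$ in a $B^k$-composition interact, so the correspondence must be stated block-by-block and one must verify no information is lost at the seams (the leading block with no zeros, runs of $1$'s abutting big parts, empty runs). As an independent check on the count, I would also note that summing the recurrence for $c_0^k(n,\ell)$ in Proposition~\ref{zeroparts} along the antidiagonals $N+\ell=n$ turns it into exactly the $k$-Pell recurrence $D(n)=2D(n-1)+D(n-2)+\dots+D(n-k)$ for $D(n):=\sum_{i\ge0}c_0^k(n-i,i)$, which together with $D(1)=c_0^k(1,0)=1$ reproves $D(n)=P^k(n)$; the bijection is the combinatorial refinement of this.
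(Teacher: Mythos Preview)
Your bijection is correct and is essentially the paper's bijection in a different dress. The paper exploits the $\mathrm{J}/\mathrm{S}_m$ marker encoding already set up in Proposition~\ref{p2}: a $k$-composition of $n-i$ with $i$ zeros is a word of length $n-i-1$ in the alphabet $\{\mathrm{J},\mathrm{S}_1,\ldots,\mathrm{S}_k\}$, and the bijection is simply the relabeling $\mathrm{J}\mapsto 1'$, $\mathrm{S}_m\mapsto m$, with Lemma~\ref{colorzero} supplying the sum check. Your block-by-block rule is exactly this map read through the internal-zeros picture (with the harmless convention swap $1\leftrightarrow 1'$, since your runs of ``plain $1$'' correspond to the paper's runs of $\mathrm{J}\mapsto 1'$). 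The paper's packaging is shorter because the marker language was already available; your version is more self-contained. Your closing observation that summing the recurrence of Proposition~\ref{zeroparts} along antidiagonals reproduces the $k$-Pell recurrence is a nice independent check that the paper does not include.
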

\begin{proof}
Write each multicompositions of $\bigcup_i C_0^k(n-i,i)$ in terms of the J and ${\rm S}_m$ markers introduced in the proof of Proposition \ref{p2} 
and then convert each J to $1'$ and each ${\rm S}_m$ to $m$.  This clearly gives a composition with parts from $1, 1', 2, 3, \ldots, k$; we need to show that its sum is $n-1$.

A multicomposition with sum $n-i$ and $i$ zeros has $n-i-1$ markers.  Suppose $j$ of those markers are J.  The remaining $n-i-j-1$ markers have the form ${\rm S}_m$ for $1 \le m \le k$.  By Lemma \ref{colorzero}, the sum of the $(m-1)$ from the various ${\rm S}_m$ must be $i$.  Therefore, the sum of the $m$ from the various ${\rm S}_m$ is $i+(n-i-j-1)=n-j-1$.  In the image composition, there are $j$ parts $1'$ which, with the total $n-j-1$ from the $m$ terms, give the sum $n-1$.

For the reverse map, suppose the composition of $n-1$ with parts from 1, $1'$, 2, 3, \ldots, $k$ has $h$ parts including $j$ that are $1'$.  Replace each $1'$ with J and each of the $h-j$ parts $m$ with a marker ${\rm S}_m$.  Converting this into a multicomposition with internal zeros, the sum of the resulting multicomposition is $h+1$.  Now the sum of the $m$ from the various ${\rm S}_m$ is $n-1-j$, so by Lemma \ref{colorzero} the number of zeros in the resulting multicomposition is the sum of the $(m-1)$ which is $n-1-j-(h-j)=n-1-h$.  That is, the multicomposition is in the set $C_0^k(h+1,n-1-h) = C_0^k(n-i,i)$ with the substitution $i=n-1-h$.

The formula for $P^k(n)$ comes from their association with the $B^k(n-1)$ compositions and the formula for $c_0^k(n,\ell)$ in Proposition \ref{zeroparts}.  
\end{proof}

\section{Exclusion statistics and further work} \label{sec6}
As a further combinatorial investigation, one could combine the ideas of Sections \ref{sec3} and \ref{sec4} by counting the restricted multicompositions $C^k_{12}(n)$, $C^k_{\text{odd}}(n)$, and $C^k_{\hat{1}}(n)$ by number of all parts, number of positive parts, and number of zeros, then look for patterns in their diagonal sums as in Section \ref{sec5} to find combinatorial derivations for additional formulas.

In the remainder of this final section, we explain how $g$-compositions arose and present some questions we still have about them.  From here until the end, we use the $g$-composition definition of \cite{op}; recall the connection to Definition \ref{kcompdef} that $g=k+1$.

In statistical mechanics (see, for example, \cite{m}), $n$-body partition functions encode the statistical equilibrium of a  system of $n$ particles at temperature $T$.
For $q$ quantum states, the Boltzmann factors $s(k) = \exp(-\beta\epsilon_k)$ for $1 \le k \le q$ are building blocks of the $n$-body partition functions where $\epsilon_k$ is a $1$-body energy state and $\beta=1/(k_\text{B}T)$ incorporates the Boltzmann constant $k_\text{B}$.  For our purposes, though, $\beta$ is irrelevant and can be set to 1.  Define a $n$-body partition function $Z(n)$ as the nested multiple sum
\[ Z(n)  = \sum_{k_1=1}^{q-2n+2} \sum_{k_2=1}^{k_1} \cdots \sum_{k_{n}=1}^{k_{n-1}}
s(k_1+2n-2) s(k_2+2n-4) \cdots s(k_{n-1}+2) s(k_{n}).\]
Due to the $+2$ shifts, the arguments of the Boltzman factors 
$s(k)$ in $Z(n)$  differ by at least 2. Terms with particles in adjacent energy states $\epsilon_k$ and $\epsilon_{k+1}$ are excluded: $Z(n)$ is
 the partition function for $n$ particles on the line  obeying exclusion
statistics of order $g=2$. For $g$-exclusion, the shifts are $+g$. 

Cluster coefficients $b(n)$ are determined by
\[ \log\left(\sum_{n=0}^{\infty}Z(n) z^n \right)=\sum_{n=1}^{\infty} b(n) z^n. \]
These were introduced to give a combinatorial enumeration of closed lattice walks with a given algebraic area \cite{op,ow}.
The $g$-compositions naturally arise when we solve for the cluster coefficients, e.g., 
\begin{gather*}
b(1)=\sum_{k=1}^q s^1(k), \\
 -b(2)={1\over 2}\sum_{k=1}^q s^2(k)+\sum_{k=1}^{q} s^1(k+1)s^1(k),  \\
b(3)={1\over 3}\sum_{k}^q s^3(k)+\sum_{k=1}^{q} s^2(k+1)s^1(k)+\sum_{k=1}^{q} s^1(k+1)s^2(k)\\+\sum_{k=1}^{q} s^1(k+2)s^1(k+1)s^1(k), 
\end{gather*}
with the understanding  $s(k)=0$ when $k>q$.  We have written the usually-implicit exponents 1 to highlight the connection to compositions.  Looking for example at the four sums which contribute to $b(3)$, notice that the exponents are precisely the four standard compositions of 3, which correspond to $g=2$. In the $g$-exclusion case, the exponents in $b(n)$ correspond to $g$-compositions of $n$, with $g$-exclusion manifesting in the insertion of up to $g-2$ zeros between positive parts, showing an even stronger exclusion between energy states. 

What about the coefficients that appear in the $b(n)$ expressions?  For instance, the 1/3, 1, 1, 1 in $b(3)$ for 2-exclusion above.  Suppose a $g$-composition $\ell$ of $n$ has parts $\ell_1, \ldots, \ell_j$ (subscripts here are indices, not colors) which may be positive integers or zeros as allowed by the definition.  The coefficient of the sum corresponding to $\ell$ is
\begin{align*}
c_g (\ell) & = {(\ell_1+\dots +\ell_{g-1}-1)!\over \ell_1! \cdots \ell_{g-1}!} \prod_{i=1}^{j-g+1} {\ell_i+\dots +\ell_{i+g-1}-1 \choose \ell_{i+g-1}} \\
& = {\prod_{i=1}^{j-g+1}(\ell_i + \dots + \ell_{i+g-1} -1)! \over \prod_{i=1}^{j-g} (\ell_{i+1} + \dots +\ell_{i+g-1} -1 )! }\prod_{i=1}^j {1\over \ell_i!},
\end{align*}
see \cite[p. 11]{op}.  Further, summing the $c_g (\ell)$ over all $g$-compositions of $n$ gives $\binom{gn}{n}/(gn)$.

What combinatorics, if any, is at work here?  For $g=2$, the binomial $\binom{2n}{n}$ counts the number of lattice  walks on a line starting from a given site and making $2n$ steps, $n$ to the right and $n$ to the left.  As in \cite{ow}, consider the half with the first step to the right.  For $n=3$, writing R for right and L for left, the ten walks are
\begin{itemize}
\item 1 with three right-left steps on top of each other (RLRLRL),
\item 3 with two right-left steps on top of each other followed by one right-left step (RRLLRL, RLRRLL, RLLRLR),
\item 3 with one right-left step followed by two right-left steps on top of each other (RLLRRL, RLRLLR, RRLRLL), and
\item 3 with three right-left steps following each other (RRRLLL, RRLLLR, RLLLRR);
\end{itemize}
see \cite[Figure 1]{ow}.  In general, the $2nc_2 (\ell)$ over all $g=2$ compositions $\ell$ count  the number of lattice walks on a line making $2n$ steps and having a given number of right-left steps in succession.

A combinatorial interpretation of $c_g (\ell)$ for $g$-compositions with $g\ge 3$ remains an open question.

\section*{Acknowledgements}
We would like to thank Olivier Giraud for assistance with the generating function arguments in Section 4.  Also, we appreciate a tip from Abdelmalek Abdesselam given through the site MathOverflow.  Finally, we commend the work of two conference organizers: Oleg Evnin of the Fourth Bangkok Workshop on Discrete Geometry, Dynamics, and Statistics, where this collaboration began, and Mel Nathanson of the Eighteenth Annual Workshop on Combinatorial and Additive Number Theory, where these results were (virtually) presented.


\begin{thebibliography}{9}

\bibitem{a}
Andrews, G.E.: The theory of compositions, IV: Multicompositions. Math. Student, Special Centenary Volume, 25--31 (2007)

\bibitem{bq}
Benjamin, A.T., Quinn, J.J.: Proofs that Really Count.  MAA Press, Washington, DC (2003)

\bibitem{bhr}
Bravo, J.J., Herrera, J.L., Ram\'{i}rez, J.L.: Combinatorial interpretation of generalized Pell numbers. J. Integer Seq. 23, 20.2.1 (2020)

\bibitem{ca}
Cayley, A.: Theorem on partitions. Messenger Math. 5, 188 (1876)

\bibitem{co}
Comtet, L.: Advanced Combinatorics. Riedel, Dordrecht, Holland (1974)

\bibitem{d}
De Morgan, A.: Elements of Arithmetic, fifth ed.  Walton and Maberly, London (1846)

\bibitem{hm} 
Heubach, S., Mansour, T.: Combinatorics of Compositions and Words.  CRC Press, Boca Raton, FL (2010)

\bibitem{hb}
Hogatt, V.E., Bicknell, M.: Diagonal sums of generalized Pascal triangles.  Fibonacci Quart. 7, 341--358 (1969)

\bibitem{m} Ma, S.-K.: Statistical Mechanics. World Scientific, Singapore (1985)

\bibitem{op}
Ouvry, S., Polychronakos, A.P.: Exclusion statistics and lattice random walks. Nuclear Phys. B 948, 114731 (2019)

\bibitem{ow} 
Ouvry, S., Wu, S.: The algebraic area of closed lattice random walks. J. Phys. A 52,  255201 (2019)

\bibitem{s}
Singh, P.: The so-called Fibonacci numbers in ancient and medieval India.  Historia Math. 12, 229--244 (1985)

\bibitem{o}
Sloane, N.J.A., ed.: The Online Encyclopedia of Integer Sequences, \url{oeis.org} (2020)





\end{thebibliography}
\end{document}